\newtheorem{thm}{Theorem}[section]
\newtheorem{prop}[thm]{Proposition}
\theoremstyle{definition}
\theoremstyle{remark}
\newtheorem{rem}{Remark}[thm]
\def\th@plain{%
  \thm@notefont{}
  \itshape 
}
\def\th@definition{%
  \thm@notefont{}
  \normalfont 
}
\begin{document}

\title{Quantum Mirror Symmetry for Borcea-Voisin Threefolds}
\date{August 2015}
\author{Andrew Schaug}
\address{Department of Mathematics, University of Michigan, Ann Arbor, MI, 48109}
\email{trygve@umich.edu}

\maketitle

\begin{abstract}
Borcea-Voisin threefolds provided some of the first examples of mirror pairs in the Hodge-theoretic sense, but their mirror symmetry at the quantum level have not previously been shown. We prove a Givental-style quantum mirror theorem for certain Borcea-Voisin threefolds: by means of certain birational models, we show that their Gromov-Witten J-functions  are related by a mirror map to solutions of the multi-parameter Picard-Fuchs equations coming from the variation of Hodge structures of their mirror partners.
\end{abstract}

\section{Background}

Mirror symmetry is one of the most influential strands of current algebraic geometry motivated by physics. In string theory, space-time is locally modelled as the product of the four standard dimensions with a Calabi-Yau complex threefold. In many cases these threefolds come in \textit{mirror pairs}, where the Hodge diamond of one is that of its mirror partner rotated by a right angle. From a physical perspective, the observables of the type IIA string theory of the one (which depends on the K\"ahler structure, varying in a moduli space of dimension $h^{1, 1}$) are identical to the observables of the type IIB string theory of its mirror (which relates to the complex structure, varying in a moduli space of dimension $h^{2, 1}$). There is of course more than just homology to be considered in the full physical duality; for the observable physics to be the same, the quantum correlators of the two theories must also be related in some way. These correlators may be packaged into potentials, from which may be derived \textit{J-functions} and \textit{I-functions} respectively. This has several mathematical consequences, some of which were first noticed by the physicists Candelas, de la Ossa, Green and Parks \cite{CDGP}. The Type IIA correlators count classes of worldsheets on the manifold subject to certain conditions. The mathematical formulation of this curve-counting theory is known as Gromov-Witten theory, and these correlators may be expressed in terms of Gromov-Witten invariants. On the B side, the I-function is given as an optimal solution of the \textit{Picard-Fuchs equations} of a family of Calabi-Yau threefolds, combining all of its \textit{periods integrals}.

In the early 1990s, Borcea \cite{B} and Voisin \cite{V} described a class of Calabi-Yau threefolds, given as $(E \times K)/\langle \sigma_E \times \sigma_K\rangle$, where $E$ is an elliptic curve, $K$ is a K3 surface, and $\sigma_E, \sigma_K$ are anti-symplectic involutions acting on them respectively. They showed that these threefolds exhibited Hodge-theoretic mirror symmetry. However, their mirror symmetry at the quantum level has not previously been demonstrated.



\section{The Aim of this Paper}

Topological symmetry for Borcea-Voisin orbifolds is well known, and was in fact one of the first examples of mirror symmetry shown for Calabi-Yau threefolds, after the quintic. A full physical mirror symmetry was proved in genus-0 for the quintic by Givental \cite{Giv}. The aim of this paper is to prove a similar theorem for Borcea-Voisin threefolds. The results may be summarised in the following theorem.

\begin{thm}
\textbf{(A quantum mirror theorem for Borcea-Voisin threefolds).} Let $\mathcal{Y}$ be a Borcea-Voisin threefold $(E \times K)/\langle \sigma \rangle$ with a well-defined twisted hypersurface birational model (as defined below) and a topological mirror Borcea-Voisin threefold $\check{\mathcal{Y}}$. Then $\check{\mathcal{Y}}$ may be given as the fibre over 0 of a 3-parameter family $\check{\mathcal{Y}}_{\psi, \varphi, \chi}.$

 Let $J_{\mathcal{Y}}(z, t)$ be the ambient, genus-0 Gromov-Witten J-functions for  the sectors generated by $E, K$ and $\mathbf{1}_{\sigma}$. Then there is a corresponding I-function $I_{\check{\mathcal{Y}}}(t, z)$ that satisfies certain equations system of Picard-Fuchs equations generated by the 3-parameter family. In particular:
 \begin{enumerate}
\item For $\mathcal{Y} = X(19, 1, 1), \check{\mathcal{Y}} = X(1, 1, 1)$ in Borcea's list, $I_{\check{\mathcal{Y}}}(\psi, \varphi, \chi)$ compiles the solutions for the full Picard-Fuchs system.
\item In all other cases, $I_{\check{\mathcal{Y}}}$ solves at least one Picard-Fuchs equation, as well as 2 other equations generalised from equations from $X(19, 1, 1)$.
\item For $\mathcal{Y} = X(6, 4, 0), \check{\mathcal{Y}} = X(14, 4, 1)$, $I_{X(6,4,0)}(\psi, 0, 0), I_{X(6, 4, 0)}(0, \varphi, 0)$ and $I_{X(6,4,0)}(0, 0, \chi)$ satisfy the one-parameter Picard Fuchs equations for the three 1-parameter families containing the twisted birational model of $X(14,4,0)$ parametrised by $\psi, \varphi, \chi$ respectively.
\end{enumerate}
Furthermore, $I_{\check{\mathcal{Y}}}$ and $J_{\mathcal{Y}}$ are related by
$$J_{\mathcal{Y}}(\tau(\mathbf{t}), z) = \frac{I_{\check{\mathcal{Y}}(t, z)}(\mathbf{t}, z)}{F(\mathbf{t})},$$ 
where $I_{\check{\mathcal{Y}}} = F(t)z + G(t) + O(z^{-1})$ and $\tau(t) = G(t)/F(t)$ is the classical topological mirror map.

Finally, without restricting to the above three sectors, in most cases $I_{\check{\mathcal{Y}}}$ satisfies a certain convenient class of further Picard-Fuchs equations coming from a family of dimension $>3$.
\end{thm}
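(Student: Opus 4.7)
The strategy is to reduce the problem to a Givental/CCIT-style mirror theorem for an orbifold hypersurface model of $\mathcal{Y}$ and then to verify the Picard-Fuchs equations by direct comparison. First I would unpack the "twisted hypersurface birational model": using the Nikulin classification underlying Borcea's list $X(r,a,\delta)$, the Calabi-Yau quotient $(E\times K)/\langle \sigma\rangle$ is birational to a degree-weighted hypersurface inside a simplicial weighted projective stack whose non-trivial inertia component realises the twisted sector $\mathbf{1}_{\sigma}$. The three sectors generated by $E$, $K$ and $\mathbf{1}_{\sigma}$ match the three moduli $\psi, \varphi, \chi$ of the mirror family, so throughout the proof we restrict attention to the state subspace they span.

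Next I would apply an appropriate orbifold mirror theorem (for instance a Coates--Corti--Iritani--Tseng statement for a convex hypersurface in a toric Deligne--Mumford stack) to the ambient stack, producing an explicit hypergeometric I-function $I_{\check{\mathcal{Y}}}(\mathbf{t}, z)$ together with its asymptotic expansion $F(\mathbf{t})z + G(\mathbf{t}) + O(z^{-1})$. The mirror map $\tau(\mathbf{t}) = G(\mathbf{t})/F(\mathbf{t})$ and the required identification
\[ J_{\mathcal{Y}}(\tau(\mathbf{t}), z) = I_{\check{\mathcal{Y}}}(\mathbf{t}, z)/F(\mathbf{t}) \]
are then formal consequences of that theorem, provided we can show that the three-sector J-function is preserved under the birational contraction from the hypersurface model to $\mathcal{Y}$. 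This preservation should follow from Ruan-style crepant invariance, since the hypersurface model is a crepant partial resolution and the chosen sectors pull back along the contraction.

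To verify the Picard-Fuchs equations I would run Griffiths--Dwork reduction on the three-parameter family $\check{\mathcal{Y}}_{\psi,\varphi,\chi}$ and compare the resulting differential operators with the hypergeometric operators annihilating the coefficients of $I_{\check{\mathcal{Y}}}$. In the highly symmetric case $\mathcal{Y} = X(19,1,1)$ the weights conspire so that the hypergeometric and Picard-Fuchs systems coincide; in the other entries of Borcea's list only one Picard-Fuchs operator matches directly, while two further hypergeometric operators provide the "generalised" substitutes named in the statement. For $X(6,4,0)$ each single-variable restriction of $I_{\check{\mathcal{Y}}}$ reduces to a classical one-parameter hypergeometric series and matches the corresponding one-parameter Picard-Fuchs equation term-by-term. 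The final claim about equations coming from a higher-dimensional family follows by embedding the three-parameter family into a universal deformation of the toric hypersurface and repeating the Griffiths--Dwork computation in the enlarged parameter space before restricting.

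The main obstacle is the case analysis: even after the mirror theorem yields $I_{\check{\mathcal{Y}}}$ in closed form, matching it against the geometric Picard-Fuchs system requires delicate weight arithmetic, because in general the hypergeometric operators are strictly weaker than the full Picard-Fuchs ideal, and it is precisely this discrepancy that forces the "generalised" replacement operators outside of $X(19,1,1)$. A secondary difficulty is justifying the birational comparison at the level of three selected sectors, since crepant transformation results are typically stated on the full Chen--Ruan state space rather than on a pre-chosen subspace; one must verify that no contributions from other sectors leak into the restricted J-function on either side of the birational correspondence.
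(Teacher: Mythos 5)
Your overall architecture matches the paper's: the I-function is the hypergeometric series produced by the Coates--Corti--Iritani--Tseng mirror theorem applied to the ambient toric stack $(\mathbb{P}(v_0,v_1,v_2)\times\mathbb{P}(w_0,\dots,w_3))/\mathbb{Z}_2$ (imported from the author's earlier paper), the relation $J_{\mathcal{Y}}(\tau(\mathbf{t}),z)=I_{\check{\mathcal{Y}}}(\mathbf{t},z)/F(\mathbf{t})$ comes for free from that theorem, the Picard--Fuchs operators are extracted from relations among the invariant monomials $m_E, m_K, m_\sigma$ in the Jacobian ring (Gr\"obner bases in degrees $2d,3d,4d$ for the three-parameter family, Griffiths--Dwork for the one-parameter families), and the verification is a term-by-term coefficient comparison after the substitution $\psi=\tilde q_E^2$, $\varphi=\tilde q_K^{w_0}$, $\chi=\tilde q_\sigma^{1/2}$.

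The one step where your route genuinely diverges, and would not go through as stated, is the birational comparison. The twist-map model $\overline{\mathcal{Y}}$ is not a crepant partial resolution of $\mathcal{Y}$ --- it is obtained by pushing $E\times K$ through the generically $2$-to-$1$ twist map into a weighted projective hypersurface --- and ``Ruan-style crepant invariance'' of genus-zero Gromov--Witten theory is a conjecture, not an available theorem, for such targets. The paper instead observes that $\mathcal{Y}$ and $\overline{\mathcal{Y}}$ are both minimal models in dimension three, hence connected by a sequence of flops (Koll\'ar), and invokes Li--Ruan's theorem that genus-zero Gromov--Witten invariants (indeed quantum cohomology) are flop-invariant. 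This is both rigorous and dissolves your secondary worry about sector-by-sector leakage, since the invariance holds for the whole quantum cohomology and restricts to the chosen sectors afterwards. A smaller misreading: the ``two other equations generalised from $X(19,1,1)$'' in case (2) are not hypergeometric stand-ins for an inaccessible Picard--Fuchs ideal; they are genuine Picard--Fuchs operators --- the involution equation $\partial_\psi\partial_\varphi=\partial_\chi^2$ coming from the universal relation $m_Em_K=m_\sigma^2$, together with operators (ii) and (iii), whose derivations persist verbatim whenever $E$ has weights $(2,1,1)$. The restriction to partial systems outside $X(19,1,1)$ is purely a matter of Gr\"obner-basis computation time, not a structural discrepancy between the hypergeometric and geometric systems.
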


The restrictions to the theorem are due to the limits of available computational power. It is of course strongly suspected that these methods should generalise entirely for all ambient genus-0 Gromov-Witten invariants and all Borcea-Voisin threefolds with twisted rational models.

\section{Preliminaries}

\subsection{Gromov-Witten theory}

Gromov-Witten theory provides a means to count curves intersecting given homology classes, and to provide a  definition of what this should mean in certain subtle boundary cases. First, we must define stable curves. 

A marked (possibly orbifold) curve $(C, p_1, p_2, \ldots, p_n)$ is \textit{stable} if $C$ is connected, compact, at worst nodal, no marked point is a node, and has finitely many automorphisms which fix the marked points. That is, every genus 0 irreducible component containing at least three marked points and every genus 1 component has at least one marked point.
 
Given an orbifold $\mathcal{Y}$, a map $f:(C, p_1, p_2, \ldots, p_n) \to \mathcal{Y}$ is \textit{stable} if and only if every component of each fibre is stable. Such a map can only be constant on an irreducible component of $C$ if that component is stable. 

There is a well-defined projective moduli stack $\overline{\mathfrak{M}}_{g, n}(\mathcal{Y}, \beta)$ of stable maps to $\mathcal{Y}$, where the source curves are of genus $g$ with $n$ marked points, and the image of the maps lie in the class $\beta \in H_2(\mathcal{Y})$. \cite{CheRu1} We call this a `closure' of the moduli space of maps from the smooth curves, but the `boundary' made up of nodal curves may in fact have larger dimension. Therefore instead of integrating over the fundamental class, we must integrate over a specified \textit{virtual fundamental class} $[\overline{\mathfrak{M}}_{g, n}(\mathcal{Y}, \beta)]^{\mathrm{vir}}$ \cite{FP}.

To an orbifold $\mathcal{Y}$ we associate an \textit{inertial manifold} $I\mathcal{Y} = \coprod_{(g)} Fix(g)$s. This moduli space is endowed with evaluation maps $\mathrm{ev}_i: \overline{\mathfrak{M}}_{g, n}(\mathcal{Y}, \beta) \to I\mathcal{Y}$,  given in the manifold case by $\mathrm{ev}_i:f \mapsto f(p_i)$. In the orbifold case, the inertial orbifold allows us to keep track of which twisted sector the evaluation map sends a marked point to. We also use this to define Chen-Ruan cohomology \cite{CheRu2}, the natural cohomology for orbifolds.

For $\gamma_1, \gamma_2, \ldots \gamma_n \in H_{\mathrm{CR}}^* (\mathcal{Y})$ we define the \textit{Gromov-Witten invariants} $$\langle \gamma_1, \gamma_2, \ldots, \gamma_n \rangle_{g, n}^{\beta}  = \int_{[\overline{\mathfrak{M}}_{g, n}(\mathcal{Y}, \beta)]^{\mathrm{vir}}} \mathrm{ev}_1^*(\gamma_1)\cup \mathrm{ev}_2^*(\gamma_2) \cup \ldots \cup \mathrm{ev}_n^*(\gamma_n). $$
In simple cases this can be thought of the number of curves transversely intersecting specified subvarieties. More generally, we may wish to include tangency conditions. For this, we note that there is a well-defined smooth orbibundle $\mathbb{L}_i \to \overline{\mathfrak{M}}_{g, n}(\mathcal{Y}, \beta)$  whose fibre at each curve $f$ in the moduli space is the cotangent space at $f(p_i)$. We define $\psi_i = c_1(\mathbb{L}_i)$. At orbifold points we must include the multiplicity of the point as an extra factor, so that $\overline{\psi_i} = n\psi_i$ \cite{CheRu1}. Then the \textit{descendant Gromov-Witten invariants} are given by 
$$\langle \tau_{a_1}(\gamma_1),\ldots, \tau_{a_n}(\gamma_n) \rangle_{g, n}^{\beta}  = \int_{[\overline{\mathfrak{M}}_{g, n}(\mathcal{Y}, \beta)]^{\mathrm{vir}}} \prod_{i=1}^n \mathrm{ev}_i^*(\gamma_i)\overline{\psi}^{a_i}. $$

All these invariants may be packaged into  $$ \langle \mathbf{t}, \ldots, \mathbf{t}\rangle_{g, n}^{\beta} = \sum_{a_1, \ldots, a_n \ge 0} \langle \tau_{k_1}(t_{k_1}), \ldots, \tau_{k_2}(t_{k_n})\rangle,$$ for $\sum_{i\ge 0} t_iz^i \in H_{\mathrm{CR}}^*(\mathcal{Y})[[z^{-1}]]$. This is the generating function for the genus-$g$ \textit{descendant potential}

$$\mathcal{F}_{\mathcal{Y}}^g (\mathbf{t}) = \sum_{n \ge 0} \sum_{\beta \in H_{\mathrm{CR}}^2(\mathcal{Y})} \frac{q^d}{n!} \langle \mathbf{t}, \mathbf{t}, \ldots \mathbf{t}\rangle_{g, n}^{\beta},$$ which takes values over the Novikov ring $\mathbb{C}[[ H_2(\mathcal{Y})]]$ (or, in our case, the subring involving only effective classes $\mathbb{C}[[ H_2(\mathcal{Y}) \cap \mathrm{NE}(\mathcal{Y})]]$.)

As in \cite{CheRu1}, we generally specify a degree-two basis $\{ \varphi_{\alpha}\}$ of $ H_{\mathrm{CR}}^*(\mathcal{Y})$, and write $\mathbf{t} = \sum_{k\ge0}\sum_{\alpha} t_k^{\alpha} \varphi_{\alpha} z^k$. 

After making changing variables according to the \textit{dilaton shift}, $$q_1^0 = t_1^0 -1, q_k^i = t_k^i,$$ we define define Givental's \textit{Lagrangian cone} $\{\partial_{\mathbf{q}} \mathcal{F} = \mathbf{p}\}$. Off zero, it is easy to check that this is a Lagrangian submanifold of the whole phase space, and a cone. It is the image of the \textit{Gromov-Witten J-function} $$J(\mathbf{t}, z) = \varphi_0z + \mathbf{t} + \sum_{n \ge 0}\sum_{a \ge0, i \in I} \frac{1}{n!z^{a+1}}\langle \mathbf{t}, \ldots, \mathbf{t},\varphi_i \psi^a\rangle_{0, n+1}^{\circ} \varphi^i.$$  
This is the object of interest in quantum mirror symmetry on the A (Gromov-Witten) side.

\subsection{Picard Fuchs Equations and Gauss-Manin Connections}

We follow the introduction given by Morrison \cite{Mor}. Given a family of Calabi-Yau $n$-folds $\mathcal{M}_{\psi}$, a smoothly varying family of holomorphic $n$-forms $\omega_{\psi}$, and holomorphically varying classes $\gamma_i(\psi)$ which generate the $n$-homology of $M_{\psi}$, the \textit{period integrals} of the family are defined to be $\int_{\gamma_i(\psi)} \omega_{\psi}$. Where $\gamma_i(\psi)$ varies along a non-contractible path in the base of the family, monodromy considerations must be taken into account, so that the period integrals demonstrate non-trivial behaviour. 

We list all the period integrals for each $\gamma_i$ into a vector $$v(z) = (\int_{\gamma_0(\psi)} \omega_{\psi}, \ldots, \int_{\gamma_r(\psi)} \omega_{\psi}).$$  Consider the span of the first $k$ $\psi$-derivatives of $v(z)$. For generic $\psi$, the dimension is constant, and boudnded by $r+1$. Therefore, for some $k$, the first $k$ $\psi$-derivatives are linearly dependent, so that we have $a_j(\psi)$ such that $$\frac{\mathrm{d}^k}{\mathrm{d}\, \psi^k} v(\psi) + \sum_{j=0}^{k-1} a_j(\psi) \frac{\mathrm{d}^j}{\mathrm{d}\, \psi^j} v(\psi) = 0.$$ The Picard-Fuchs equation characterises the period integrals, so it is given by $$\frac{\mathrm{d}^k}{\mathrm{d}\, \psi^k} f(\psi) + \sum_{j=0}^{k-1} a_j(\psi) \frac{\mathrm{d}^j}{\mathrm{d}\, \psi^j} f(\psi) = 0.$$ In order to avoid complications from singularities, it is common to rewrite this as $$(\psi\frac{\mathrm{d}}{\mathrm{d}\, \psi^k})^k f(\psi) + \sum_{j=0}^{k-1} b_j(\psi)  (\psi\frac{\mathrm{d}^j}{\mathrm{d}\, \psi})^j f(\psi) = 0.$$

The full space of complex structures of a given Calabi-Yau threefold $\mathcal{Y}$ is given by $H^{2, 1}(\mathcal{Y})$. Thus the full Picard Fuchs equations are derived from varying Hodge structures within a family of dimension at most $h^{2, 1}$. In the case of the quintic threefold and elliptic curves, $h^{2, 1} = 1$, so we derive one equation in one parameter $\psi$.

In the more general case, including complete intersections and all Borcea-Voisin manifolds, the full story requires a multi-parameter model: in fact the simplest Borcea-Voisin case is a three-parameter model. Consider a Calabi-Yau threefold $M$ given as the quotient of some hypersurface $\{\mathcal{Q} = 0\}/G$ in some weighted projective space $\mathbb{P}^4(\mathbf{q})$.

The local ring $\mathcal{R}_{\mathrm{loc}}$ of such Calabi-Yau quotients may be generated by the degree-$d$ $G$-invariant monomials of the weighted projective space, identified with a subspace of the cohomology ring (in fact, that part induced from the ambient weighted projective space). That is, it has a degree-symmetric graded basis $$\{m_0 = 1; m_1, m_2, \ldots, m_{h^{2, 1}}; m_{h^{2,1}+1}, \ldots, m_{2h_{2, 1}}; m_{2h^{2, 1}+1} \}.$$ It may therefore be given as a quotient $\mathbb{C}[x, y, z, w, v]^G/\mathcal{I},$ where $\mathcal{I}$ is generated as an ideal by relations among the $m_i$.

Varying the complex structure of $M$ along these monomials produces a family $M_{\psi_1, \ldots, \psi_{h^{2,1}}} = \{\mathcal{Q} + \sum_{i=1}^{h^{2, 1}} \psi_i m_i = 0\}/G.$ The Picard-Fuchs equations are then given by the generating relations of $\mathcal{I},$ replacing $m_i$ by $\frac{\partial}{\partial \psi_i},$ and depending on convention changing variables to some appropriate power of $\psi$. Furthermore, often only a cursory examination of the structure of $\mathcal{R}_{\mathrm{loc}}$ is required to determine the number and order of the Picard-Fuchs equations. For example, for a Calabi-Yau threefold in a one-parameter family, its local ring has a basis $\{1, m, m^2, m^3\}$, so that there is one relation expressing $m^4$ as a linear combination of the rest, given a fourth-order equation. For a three-parameter model, there are six possible degree $2d$ products of $m_1, m_2, m_3$, but the degree-2 part of the local ring must have degree 3; therefore there must be three relations of degree 3, giving 3 Picard-Fuchs equations of order 2, as well as other more complicated equations of higher order.

Another understanding was originally provided by Manin \cite{M}. Given a one-parameter family $\pi : V \to B$, the vector bundle $R^n\pi_*\mathbb{C}\otimes \mathcal{O}_B$ over $B$ comes with a natural \textit{Gauss-Manin} connection, $\nabla_{GM}(\psi),$ for $\psi \in B$. Let $\omega_{\psi}$ be a holomorphic form of appropriate degree to define the period integrals. Then relations $$\sum_{k=0}^n f_k(b) \nabla_{GM}(\psi)^k \omega_{\psi} = 0$$ correspond to the Picard-Fuchs equations $\sum_{k=0}^n f_k(\psi) (\frac{\mathrm{d}^k}{\mathrm{d}\, \psi^k} f) = 0,$ satisfied by the period integrals. Then the Picard-Fuchs equations can be seen in fact to be equivalent to the Gauss-Manin connection, in that they define the flat sections. An analogous situation holds for multi-parameter families.

In general, the solutions to the Picard-Fuchs equations (that is, the periods) may be compiled into an optimal solution, a multivariate function of all parameters, known as the $I-function$ of that family. This is the object of interest in quantum mirror symmetry.

For one-parameter families, the Dwork-Griffiths method provides a method to compute the (one) Picard-Fuchs equation. Suppose we have a family of polynomials $\mathcal{Q}_{\psi}(x_0, x_1, \ldots, x_n)$ of degree $d$ and weights $w_0, \ldots, w_n$. Maximal-degree differentials on $\mathbb{P}(w_0, \ldots, w_n)$ are given by $P\Omega/Q$ with matching degrees for the numerator and denominator, where $$\Omega = \sum_{i=0}^n (-1)^i w_jx_j\bigwedge_{j \ne i}\mathrm{d}x_j.$$

Such forms may be associated to linear forms on $H_n(\mathbb{P}(w_0, \ldots, w_n))$ by the map $\gamma \mapsto \int_{\gamma}\frac{P\Omega}{Q^r}.$ This sets up a correspondence between $H^n(\mathbb{P}(w_0, \ldots, w_n))$ and the forms $\frac{P\Omega}{Q^r}$ modulo $J_{\mathcal{Q}_{\psi}}.$

The key to the algorithm is Griffiths' reduction of pole order formula modulo $J_{\mathcal{Q}_{\psi}}.$ Choose arbitrary polynomials $P_i$ of degree $w_i + rd - \sum_{i=1}^n w_i.$ Then the exterior derivative of $$ \sum_{i<j}\frac{w_ix_iP_j - w_jx_jP_i}{Q^r}\mathrm{d}x_0 \wedge \ldots \wedge \hat{\mathrm{d}{x_i}} \wedge \ldots, \wedge \hat{\mathrm{d}{x_j}} \wedge \ldots \wedge \mathrm{d}x_n$$ is $$\frac{r\sum_{i=0}^nP_i \frac{\partial Q}{\partial x_i} \Omega}{Q^{r+1}} - \frac{\sum_{i=0}^n\frac{\partial P_i}{\partial x_i}\Omega}{Q^l}.$$ It follows that $$\frac{r\sum_{i=0}^n{P_i \frac{\partial Q}{\partial x_i} \Omega}}{Q^{r+1}} \cong_{J_{\mathcal{Q}_{\psi}}} \frac{\sum_{i=0}^n\frac{\partial P_i}{\partial x_i}\Omega}{Q^l}.$$ That is, whenever the numerator of a rational $n$-form is in $J_{\mathcal{Q}_{\psi}},$ it is possible to reduce the pole order explicitly. Starting with such an $n$-form and taking its derivatives, we may perform this reduction as many times as needed to determine linear relation among them. This will give the Picard-Fuchs equation.

\subsection{Borcea-Voisin Orbifolds}

Borcea \cite{B} and Voisin \cite{V} constructed a set of Calabi-Yau threefolds, most of which naturally come in mirror pairs. Let $E$ be an elliptic curve, endowed with its natural involution $\sigma_E$, induced by the map $z \mapsto -z$ in the representation $E = \mathbb{C}/\Lambda$ for some lattice $\Lambda$. Let $K$ be a K3-surface endowed with an anti-symplectic involution $\sigma_K$, that is, an involution which induces the map $-I$ on $H^2(K)$. These latter were initially studied by Nikulin in \cite{N}. Then the quotient $\mathcal{Y} = (E\times K)/\langle \sigma_E, \sigma_K\rangle$, or more properly its smooth resolution, is a \textit{Borcea-Voisin manifold}. It is clearly a Calabi-Yau threefold.

The cohomology of $\mathcal{Y}$ depends on the $K$ and $\sigma_K$. The fixpoint set of $\sigma_K$ is given by a union of $N$ curves, whose genera sum to $N'$. It follows that the $H^{*,*}(\mathcal{Y})$ is given by \begin{equation*} H_{CR}^*(\mathcal{Y}) = \begin{tabular}{llllllll}
    & & & 1 & & & \\
   & & 0 & & 0 & & \\
   & 0 &  & $h_{1,1}$ &  & 0 & \\
   1 & & $h_{2, 1}$ & & $h_{2, 1}$ & & 0 \\
   & 0 &  & $h_{1,1}$ &  & 0 & \\
   & & 0 & & 0 & & \\
   & & & 1 & & & \\
 \end{tabular},\end{equation*}
 where $h_{1,1} = 11+5N-N'$, $h_{2, 1} = 11+5N'-N$. We find that for several Nikulin involutions of K3 surfaces whose fixpoint set has $N$ components and whose genera sum to $N'$, there is another with $N'$ components whose genera sum to $N$, \cite{V}. These therefore correspond to mirror Borcea-Voisin manifolds in the Hodge diamond sense, with $N = 0, N' = 0$ and $N = 2, N' = 2$ corresponding to self-mirror manifolds.

Suppose $E = E_f := \{X^2 + f(Y, Z) = 0\} \subset \mathbb{P}(v_0, v_1, v_2)$ and $K = K_g := \{x^2 + g(y, z, w)\} \subset \mathbb{P}(w_0, w_1, w_2, w_3),$ and $\mathrm{gcd}(v_0, w_0) = 1.$ As in \cite{ABS2} define the \textit{twist map} $$T:(X, Y, Z, x, y, z, w) \mapsto ((\frac{x}{X})^{\frac{v_1}{v_0}}Y, (\frac{x}{X})^{\frac{v_2}{v_0}}Z, y, z, w).$$ The image of $E\times K$ under this map is the hypersurface $$\overline{\mathcal{Y}} = \{f(Y, Z) - f(y, z, w) = 0\} \subset \mathbb{P}(w_0v_1, w_0v_2, v_0w_1, v_0w_2, v_0w_3).$$ $T\vert_{E \times K}$ is generically a double cover; the quotient map induces a birational equivalence $ \mathcal{Y} \dashrightarrow \overline{\mathcal{Y}}$ fitting into the commutative diagram 
\begin{displaymath}
    \xymatrix{
        E \times K \ar@{-->}[rrd]^T \ar[d] & &  \\
        \mathcal{Y} = (E \times K)/\mathbb{Z}_2   \ar@{-->}[rr]^{\overline{T}} & &  \overline{\mathcal{Y}}}
\end{displaymath}

If a Borcea-Voisin threefold $(E_f \times K_g)/\mathbb{Z}_2$ has a Borcea-Voisin mirror, then that mirror may be given by $(E_f \times K_{\check{g}})/\mathbb{Z}_2$ in the same way. $E_f$ is self-mirror, and $K_{\check{g}}$ is the mirror K3 surface of $K_g$. A list of  Borcea-Voisin mirror pairs may be found expressed concisely in \cite{ABS1}. We list two examples below. 
$$\begin{tabular}{|l|l|l|l|}
\hline
$g(y, z, w)$ & $K_g$ & $\check{g}(y, z, w)$ & $K_{\check{g}}$ \\
\hline
$y^6+ z^6 + w^{6}$ & $\mathbb{P}(3, 1, 1, 1)[6]$ & $y^5 + yz^5 + zw^6$ & $\mathbb{P}(25, 10, 8, 7)[50]$ \\
$y^5 + z^5 + w^{10}$ & $\mathbb{P}(5, 2, 2, 1)[10]$ & $w^8 + wz^4 + zy^5$ & $\mathbb{P}(16, 5, 7, 4)[32]$\\
\hline
\end{tabular}$$

To each Borcea-Voisin orbifold Borcea \cite{B} associated invariants $(r, a, \delta)$ coming from the Picard lattice. In Borcea's notation, these correspond respectively to Borcea-Voisin orbifolds
$$\begin{tabular}{|l|l|}
\hline
X(1, 1, 1) & X(19, 1, 1)\\
X(6, 4, 0) & X(14, 4, 0)\\
\hline
\end{tabular}$$
For the first mirror pair, a twist map is defined when $E$ is modelled by $\{X^2+Y^4+Z^4 = 0\},$ and for the second when $E$ is modelled by $\{X^2 + Y^3 + Z^6 = 0\}.$

\section{The Gromov-Witten Theory of Borcea-Voisin Varieties}

In \cite{S}, based on work of \cite{CCIT1}, we found the ambient, genus-zero Gromov-Witten I-function of Borcea-Voisin varieties of the form $E = \{X^2 + f(Y, Z) = 0\} \subset \mathbb{P}(v_0, v_1, v_2)$ and $K = \{x^2 + g(y, z, w)\}$, by  means of expressing the ambient spaces $$(\mathcal{P}(v_0, v_1, v_2) \times \mathcal{P}(w_0, w_1, w_2, w_3))/\mathbb{Z}_2$$ as toric stacks in the sense of \cite{BCS}. For the purposes of that paper, we were chiefly interested in the Fermat polynomials.  Here we shall use the same formulation for more general polynomials. 

The Gromov-Witten J-function is found there from an I-function via a `mirror map', but one which is not shown to solve any Picard-Fuchs equations. We restrict to the Chen-Ruan sectors induced by $D_E = \{ Y = 0\},$ $D_K = \{ y = 0\}$ and the Chen-Ruan sector $\mathbf{1}_{\sigma}$ corresponding to the involution $\sigma$, and set
\begin{equation*}
\tilde{q}_E = e^{2t_1+t_2+t_3}, \quad \tilde{q}_K = e^{w_0t_4+w_1t_5+w_2t_6+w_3t_7}, \quad \tilde{q}_{\sigma} = e^{t_1+t_4+2t_8}.
\end{equation*}
Then, from \cite{S}, we have
\begin{align*}
 \mathrm{(1)} \quad & I_{\mathrm{GW}} (\mathcal{Y}) =  z \sum_{c \in \mathbb{N}_0} \sum_{\substack{a, b \in \mathbb{Z}\\ a\ge -c/2\\b \ge -c/w_0}} \tilde{q}_E^{D_E/z+a} \tilde{q}_{K}^{D_K/z+b} \tilde{q}_{\sigma}^c  \times \nonumber \\
& \frac{\Gamma(2D_E/z+1) \Gamma(D_E/z+1)^2 \prod_{i=1}^4\Gamma(w_iD_K/z+1)}{\Gamma(2D_E/z+2a+c+1)\Gamma(D_E/z+a+1)^2\Gamma(w_0D_K/z+w_0b+c+1)}\times \nonumber \\
& \frac{\Gamma(4D_E/z+4a+2c+1)\Gamma(2w_0D_K/z + 2w_0b+2c + 1)}{\prod_{i=1}^3\Gamma(w_iD_K/z+w_ib+1)\Gamma(2c+1)\Gamma(4D_E/z+1)\Gamma(6D_K/z+1)} \nonumber \\
+& \sum_{c \in \frac{1}{2}\mathbb{N}_0\backslash \mathbb{N}_0} \sum_{\substack{a, b \in \mathbb{Z}\\ a\ge -c/2\\b \ge -c/w_0}}  \tilde{q}_E^{D_E/z+a} \tilde{q}_{K}^{D_K/z+b} \tilde{q}_{\sigma}^c \times \nonumber \\
& \frac{\Gamma(2D_E/z+\frac{1}{2}) \Gamma(D_E/z+1)^2 \Gamma(w_0D_K/z+\frac{1}{2})\prod_{i=1}^3\Gamma(w_iD_K/z+1) }{\Gamma(2D_E/z+2a+c+1)\Gamma(D_E/z+a+1)^2\Gamma(w_0D_K/z+3b+c+1)})\times \nonumber \\
 & \frac{\Gamma(4D_E/z+4a+2c+1)\Gamma(2w_0D_K/z + 2w_0b+2c + 1)}{\prod_{i=1}^3\Gamma(w_iD_K/z+w_ib+1)\Gamma(2c+1)\Gamma(4D_E/z+1)\Gamma(2w_0D_K/z+1)}\mathbf{1}_{\sigma}.
\end{align*}

It follows there from results of \cite{CCIT1} that the Gromov Witten J-function is given by
$$J_{\mathcal{Y}}(\tau(\mathbf{t}), z) = \frac{I_{\check{\mathcal{Y}}(t, z)}}{F(t)},$$ 
where $I_{\check{\mathcal{Y}}} = F(t)z + G(t) + O(z^{-1})$ and $\tau(t) = G(t)/F(t)$ is the classical topological mirror map.

In three dimensions, any two minimal models (including Calabi-Yau threefolds by definition) are related by a sequence of flops \cite{Kol}.  Furthermore, quantum cohomology and therefore genus-zero Gromov-Witten invariants are invariant under flops \cite{LiRu} (indeed Gromov-Witten theory is conjectured to be so more generally). It follows that the genus-zero J-functions of $\mathcal{Y}$ and $\overline{\mathcal{Y}}$ are equal.

\section{The Picard-Fuchs equations of Borcea-Voisin Threefolds}

All computations in this section requiring Groebner bases were performed with the aid of Macaulay2. We work with the hypersurface birational model $\overline{\mathcal{Y}}$. For all cases, there are $\sigma$-invariant quasi-homogeneous monomials of the same degree as the defining polynomials of degree $d$ given by $$m_E = Y^2Z^2, \quad m_K = y^2z^2w^2, \quad m_{\sigma} = YZyzw.$$ More generally, there may be $p$ extra independent monomials $m'_i$, depending on $f(Y, Z)$ and $g(y, z, w)$. 
A suitable most general variation of Hodge structures is given by the family of polynomials $$\mathcal{Q}_{\psi, \varphi, \chi, \varphi_1, \ldots, \varphi_k} = f(Y, Z) + g(y, z, w) + \psi Y^2Z^2 + \varphi y^2z^2w^2 + \chi YZyzw + \sum_{i=1}^p \varphi_i m'_i.$$

We seek homogeneous relations between these monomials modulo $J_{\mathcal{Q}}$, in degrees $2d, \; 3d$ and $4d.$

\subsection{The Involution Equation}

For all Borcea-Voisin manifolds for which the twist map is well defined, we have the `involution relation' $m_Em_K = m_{\sigma}^2,$ which may be thought of as characterising the involution. This cedes one Picard-Fuchs equation, which we shall call the \textit{involution equation}: $$\partial_{\psi}\partial_{\varphi}f = \partial_{\chi}^2f.$$ This equation holds for the period integrals of every Borcea-Voisin threefold for which a twist map exists.

\subsection{The full 3-parameter Picard-Fuchs equations for X(1, 1, 1)}

We find the Picard-Fuchs equations for $\overline{\mathcal{Y}}$ when $E = \{X^2 + Y^4 + Z^4 = 0\}$ and $K = \{x^2 + y^6 + z^6 + w^6 = 0\},$ so that under the twist map $$\mathcal{Q}_{\psi, \varphi, \chi} = Y^4 + Z^4 +y^6+z^6+w^6+\psi Y^2Z^2 + \varphi y^2z^2w^2 + \chi YZyzw.$$ We consider the monomials $m_E, m_K, m_{\sigma}$. We must find a complete generating set of independent homogeneous relations between them, up to degree $4d$. First, all calculations must be done modulo the Jacobean ideal $J_{\mathcal{Q}_{\psi}}.$ A Groebner basis for the Jacobean is found, and then all calculations are performed via the normal form. (This Groebner basis calculation is the chief inhibitor in terms of computation time, especially when the defining equations have degrees higher than those examples dealt with in this paper.)

\textbf{Degree 2d.} There are 6 products of the $m_E, m_K, m_{\sigma}$ of degree $2d$. By Poincar\'e duality they span a 3-dimensional subspace of the local ring. There are therefore 3 independent relations. One is of course provided by the involution relation $R_1 :=  -m_{\sigma}^2 - m_Em_K.$ To find the other two, we first express the monomials in normal form with respect to the Gr\"obner basis of $J_{\mathcal{Q}}$, and reduce the resulting coefficient matrix. After a convenient choice of minors, we find the relations
\begin{align*}
R_2 & := 4(4-\psi^2)m_E^2 + 4\psi\chi m_Em_{\sigma} + \chi^2m_{\sigma}^2 = 0,\\
R_3 & := 4(4-\psi^2)m_Em_{\sigma} + 4\psi\chi m_{\sigma}^2 + \chi^2m_Km_{\sigma} = 0.
\end{align*}
The relation for $m_E$ derived from the Dwork-Griffiths method may also be derived from these.

\textbf{Degree 3d.} There are 10 distinct products of $m_E, m_K, m_{\sigma}$ of degree $3d$. By Poincar\'e duality, we require 9 independent relations to cut this down to rank 1; by multiplying the 3 degree-2 relations by $m_E, m_K, m_{\sigma}$ we get 9 relations. Unfortunately, they are not all independent. This can be seen fairly quickly by noting that $m_3R_2-m_1R_3 = 4(4-\psi^2)m_3R_1$. By some manipulation and factorisation, we can express all other monomials in terms of $m_Em_K^2, m_K^3, m_Em_Km_{\sigma}$ which are themselves subject to the further relations
\small{
\begin{align*}
\begin{split}
 & R_4  := \chi(128\psi^3 \varphi^3  - 48\psi^2 \varphi^2\chi^2  + \chi^6  - 512\psi\varphi^3  + 192\varphi^2\chi^2  + 3456\psi^3  - 13824\psi)m_Em_K^2 \\
&  \quad\quad\quad + 4(32\psi^4\varphi^3  - 6\psi^2\varphi\chi^4  + \psi\chi^6  - 256\psi^2\varphi^3  + 24\varphi\chi^4  + 864\psi^4+ 512\varphi^3  \\
& \quad \quad \quad - 6912\psi^2  + 13824)m_Em_Km_{\sigma} = 0,
\end{split}\\
 \begin{split}
& R_5  := 32(\varphi+3)(\varphi^2-3\varphi+9)(32\psi^4\varphi^3  - 6\psi^2\varphi\chi^4  + \psi\chi^6  - 256\psi^2\varphi^3  + 24\varphi\chi^4  + 864\psi^4\\
& \quad\quad\quad  + 512\varphi^3  - 6912\psi^2  + 13824)m_K^3 - \chi^2(1536\psi^4\varphi^4  - 1024\psi^3\varphi^3\chi^2  + 240\psi^2\varphi^2\chi^4 \\
& \quad \quad\quad  - 24\psi\varphi\chi^6  + \chi^8  - 12288\psi^2\varphi^4  + 4096\psi\varphi^3 \chi^2  - 192\varphi^2\chi^4  - 20736\psi^4 \varphi + 3456\psi^3 \chi^2 \\
& \quad \quad \quad  + 24576\varphi^4  + 165888\psi^2\varphi - 13824\psi\chi^2  - 331776\varphi)m_Em_K^2 = 0.
 \end{split}
\end{align*}
}
Similarly to before, the relation for $m_K$ derived from the Griffiths-Dwork method can be derived from these. All of these relations together may be checked to have rank 9, as required.

\textbf{Degree 4d.} We multiply all of the degree-$3d$ relations by $m_E, m_K, m_{\sigma}$ to find 27 relations; the coefficient matrix of these relations in the product monomials of $m_E, m_K, m_{\sigma}$ is of rank 15, which is the number of possible product monomials of degree $4d$. Therefore this is a complete set of monomials and the local ring is indeed 0 in all degrees higher than $3d,$ and we have no further relations. 

Therefore, the full set of Picard-Fuchs operators for the 3-parameter family is given by the following:
\begin{align*}
\mathrm{(i)} \; & \partial_{\chi}^2 - \partial_{\psi}\partial_{\varphi},\\
\mathrm{(ii)}\; &  4(4-\psi^2)\partial_{\psi}^2 + 4\psi\chi \partial_{\psi}\partial_{\chi} + \chi^2\partial_{\chi}^2,\\
\mathrm{(iii)} \; &  4(4-\psi^2)\partial_{\psi}\partial_{\chi} + 4\psi\chi \partial_{\chi}^2 + \chi^2\partial_{\varphi}\partial_{\chi},\\
\begin{split}
 \mathrm{(iv)} \; &\chi(128\psi^3 \varphi^3  - 48\psi^2 \varphi^2\chi^2  + \chi^6  - 512\psi\varphi^3  + 192\varphi^2\chi^2  + 3456\psi^3  - 13824\psi)\partial_{\psi}\partial_{\varphi}^2 \\
&  \quad + 4(32\psi^4\varphi^3  - 6\psi^2\varphi\chi^4  + \psi\chi^6  - 256\psi^2\varphi^3  + 24\varphi\chi^4  + 864\psi^4+ 512\varphi^3  \\
& \quad   - 6912\psi^2  + 13824)\partial_{\psi}\partial_{\varphi}\partial_{\chi},
\end{split}\\
 \begin{split}
\mathrm{(v)} \; & 32(\varphi+3)(\varphi^2-3\varphi+9)(32\psi^4\varphi^3  - 6\psi^2\varphi\chi^4  + \psi\chi^6  - 256\psi^2\varphi^3  + 24\varphi\chi^4  + 864\psi^4\\
& \quad  + 512\varphi^3  - 6912\psi^2  + 13824)\partial_{\varphi}^3 - \chi^2(1536\psi^4\varphi^4  - 1024\psi^3\varphi^3\chi^2  + 240\psi^2\varphi^2\chi^4 \\
& \quad   - 24\psi\varphi\chi^6  + \chi^8  - 12288\psi^2\varphi^4  + 4096\psi\varphi^3 \chi^2  - 192\varphi^2\chi^4  - 20736\psi^4 \varphi \\
& \quad    + 3456\psi^3 \chi^2 + 24576\varphi^4  + 165888\psi^2\varphi - 13824\psi\chi^2  - 331776\varphi)\partial_{\psi}\partial_{\varphi}^2.
 \end{split}
\end{align*}

\subsection{Three one-parameter Picard-Fuchs equations for X(14, 4, 0)}

For the one-parameter families given by  a given invariant monomial $m_i$, the Dwork-Griffiths method may be used to find linear relations between $$\omega_i^{(r)} = \frac{(-1)^r (r-1)! \psi_i^r m_i^{r-1}\Omega}{Q^r}.$$

Varying $m_E$ to give the family $Y^3 + Z^6 + y^5 + z^5 + w^{10} + \psi Y^2Z^2,$ we find the 2nd-order Picard-Fuchs operator
$$\mathrm{(vi)} \quad (4\psi^3 + 27)\partial_{\psi}^2 + (-16\psi^3 + 54)\partial_{\psi} - 2\psi^3.$$

Varying $m_K$ to give the family $Y^3 + Z^6 + y^5 + z^5 + w^{10} + \varphi y^2z^2w^2,$ we find the 3th-order Picard-Fuchs operator
$$\mathrm{(vii)} \quad(16\varphi^5 + 3125)\partial_{\varphi}^3 + (-216\varphi^5 + 28125)\partial_{\varphi}^2 - 312\varphi^5\partial_{\varphi} - 12\varphi^5.$$

Varying $m_{\sigma}$ to give the family $Y^3 + Z^6 + y^5 + z^5 + w^{10} + \chi YZyzw,$ we find the 4th-order Picard-Fuchs operator
\begin{equation*}
\begin{split}
\mathrm{(viii)} & \quad  (\chi^{30}-112100835937500000000)\partial_{\chi}^4 - 40(\chi^{30}+560504179687500000000)\partial_{\chi}^3\\
&   -300 (\chi^{10}   - 6075000)(\chi ^{20}  + 6075000\chi^{10}   + 36905625000000)\partial_{\chi}^2 - 360\chi^{30}\partial_{\chi} -24\chi^{30}.
\end{split}
\end{equation*}

\subsection{Picard-Fuchs equations derived from other multiplication relations}

There is one extra class of Picard-Fuchs equations which will not be difficult to check. Varying by other $\sigma$-invariant monomials corresponding to different Chen-Ruan classes, there are several other simple relations coming from multiplicative identities. For example, in degree 6, if we set $m'_1 = y^6, m'_2 = z^6, m'_3 = y^3z^3,$ we have the relation $m'_1m'_2 = {m'_3}^2,$ giving the Picard-Fuchs equation $\partial_{\varphi_1}\partial_{\varphi_2} = \partial_{\varphi_3}^2.$ 

\section{Proof of Quantum Mirror Symmetry}

We show that the I-functions already shown to transform to the Gromov-Witten J-functions under a mirror map compile solutions to the Picard-Fuchs equations, that is, the period integrals of their respective families. The equations are given in terms of $\psi, \phi, \chi, \varphi_i$. It will be easier to change the variables in the I-function than the Picard-Fuchs equations, by performing the same analytic continuation in \cite{S}, as follows.

After analytic continuation in $a$ we set $$m = - 4D_E/z - 4a - 2c,$$ so that we can rewrite the factors involving $a$ in a sum over $m$ as $$ \frac{-\frac{\pi i}{2}e^{2\pi i(\frac{m}{4} + \frac{c}{2})}}{e^{-2\pi i D_E/z} -e^{2\pi i (\frac{m}{4} + \frac{c}{2})}}\frac{(-1)^m}{\Gamma(m)\Gamma(1-\frac{m}{2})\Gamma(1-\frac{m}{4}-\frac{c}{2})^2}.$$ 

The identity $\Gamma(x)\Gamma(1-x) = \frac{\pi}{\mathrm{sin}(\pi x)}$ allows us to write the second fraction as $$(-1)^m \frac{\Gamma(\frac{m}{2})\mathrm{sin}(\frac{\pi m} {2})\Gamma(\frac{m}{4}+\frac{c}{2})^2\mathrm{sin}(\pi(\frac{m}{4}+ \frac{c}{2}))^2}{\pi^3}.$$

Similarly for the factors involving $b$, we set $n := -2w_0D_K/z - 2w_0b-2c$, and we similarly rewrite these as $$ \frac{-\frac{\pi i}{2}e^{2\pi i(\frac{n}{2w_0} + \frac{c}{2})}}{e^{-2\pi i D_K/z} -e^{2\pi i (\frac{n}{2w_0} + \frac{c}{2})}}\frac{(-1)^n}{\Gamma(n)\Gamma(1-\frac{n}{2})\prod_{i=1}^3\Gamma(1-\frac{w_in}{2w_0}-\frac{c}{2})}.$$ 

Finally, we make the substitution 
\begin{equation*}
\psi = \tilde{q}_E^2, \quad \varphi = \tilde{q}_K^{w_0}, \quad \chi = \tilde{q}_{\sigma}^{1/2}.
\end{equation*}

\begin{prop}
The involution equation holds for all $I_{\mathcal{Y}}(\psi, \varphi, \chi).$
\end{prop}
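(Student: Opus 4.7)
The plan is to verify the involution equation term-by-term, directly on the series expansion~(1). After the substitution $\psi = \tilde{q}_E^2$, $\varphi = \tilde{q}_K^{w_0}$, $\chi = \tilde{q}_\sigma^{1/2}$, every term of $I_{\mathcal{Y}}$ becomes a monomial $\psi^{(D_E/z+a)/2}\varphi^{(D_K/z+b)/w_0}\chi^{2c}$ with coefficient $A(a,b,c)$ built from the Gammas in~(1). The operator $\partial_\psi\partial_\varphi$ acts on each such monomial by multiplying by $\tfrac{(D_E/z+a)(D_K/z+b)}{2w_0}$ and shifting the exponents to those indexed by $(a-2, b-w_0, c)$; the operator $\partial_\chi^2$ multiplies by $2c(2c-1)$ and shifts to $(a, b, c-1)$.

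First I would reindex both resulting series so that they are summed against the same monomial. The boundary contributions from $\partial_\chi^2$ at $c=0$ (resp.\ $c=\tfrac12$ in the twisted sector) vanish because $\partial_\chi^2 \chi^0 = 0$ (resp.\ $\partial_\chi^2 \chi^1 = 0$), and the corresponding boundary shifts on the other side are killed by Gamma-function poles in $A$, so both sums really do run over the same index set. The two sums then agree if and only if, for every admissible $(a,b,c)$,
$$A(a+2, b+w_0, c)\cdot\frac{(D_E/z+a+2)(D_K/z+b+w_0)}{2w_0} \;=\; A(a,b,c+1)\cdot 2(c+1)(2c+1).$$

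Using $\Gamma(x+1)=x\Gamma(x)$ repeatedly, each shift of $A$ contributes an explicit Pochhammer product. The shift $a \to a+2$ produces factors from $\Gamma(2D_E/z+2a+c+1)$, $\Gamma(D_E/z+a+1)^2$ and $\Gamma(4D_E/z+4a+2c+1)$; the shift $c \to c+1$ produces factors from $\Gamma(2D_E/z+2a+c+1)$, $\Gamma(w_0 D_K/z+w_0 b + c+1)$, $\Gamma(4 D_E/z+4a+2c+1)$, $\Gamma(2w_0 D_K/z+2w_0 b+2c+1)$ and $\Gamma(2c+1)$; and the shift $b \to b+w_0$ produces $w_0^2$ factors from $\Gamma(w_0 D_K/z + w_0 b+c+1)$, $2w_0^2$ factors from $\Gamma(2w_0 D_K/z+2w_0 b+2c+1)$, and $w_0(w_1+w_2+w_3)$ factors from $\prod_{i=1}^3 \Gamma(w_i D_K/z+w_i b+1)$. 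The Calabi--Yau weight relation $w_1+w_2+w_3=w_0$ is exactly what makes these totals balance, so that after collecting all the Pochhammer factors both sides of the coefficient identity reduce to the same rational function of $(D_E/z+a,\,D_K/z+b,\,c)$.

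The $\mathbf{1}_\sigma$-sector of~(1) is handled by the identical procedure; the only novelty is that certain Gammas acquire $\tfrac12$-shifts and $\Gamma(6D_K/z+1)$ is replaced by $\Gamma(2w_0D_K/z+1)$, but the Pochhammer accounting is structurally the same. The main obstacle throughout is the bookkeeping of the $w_0$-fold shift in $b$: there is no conceptual subtlety, only the need to organise the cancellations carefully. Structurally, the identity is forced by the relation $m_E m_K = m_\sigma^2$ modulo $J_{\mathcal{Q}}$, which manifests on the level of period integrals as precisely the involution equation, so the verification above is essentially an explicit witness to this fact at the level of the $I$-function.
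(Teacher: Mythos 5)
Your overall strategy --- compare coefficients term by term and absorb the discrepancies with $x\Gamma(x)=\Gamma(x+1)$ --- is the same as the paper's, but you apply the operators $\partial_\psi\partial_\varphi$ and $\partial_\chi^2$ directly to the series (1), whereas the paper first performs the analytic continuation in $a$ and $b$ (re-expanding around the orbifold point, which is where the deformation parameters $\psi,\varphi,\chi$ of the family $\mathcal{Q}_{\psi,\varphi,\chi}$ actually live) and only then substitutes $\psi,\varphi,\chi$ and compares coefficients in the indices $m,n,c$. This is not an optional convenience: your coefficient identity $A(a+2,b+w_0,c)\cdot\tfrac{(D_E/z+a+2)(D_K/z+b+w_0)}{2w_0}=A(a,b,c+1)\cdot(2c+2)(2c+1)$ is false for the coefficients of (1). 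Take the untwisted sector with $(w_0,w_1,w_2,w_3)=(3,1,1,1)$, $(a,b,c)=(0,0,0)$, and extract the leading part in the nilpotents $D_E/z,D_K/z$: the left side is $\tfrac{8!\cdot 18!}{4!\,(2!)^2\,9!\,(3!)^3}\approx 3.4\times 10^{10}$ while the right side is $4$. The Calabi--Yau relation $w_1+w_2+w_3=w_0$ balances only the \emph{number} of Pochhammer factors produced on each side (which is why both sides have the same degree as rational functions of the indices), not their values, so the ``Pochhammer accounting'' does not close. The mechanism that actually makes the paper's verification work is visible only after the continuation: the continued coefficient indexed by $(m,n,c)$ depends on the indices largely through the combinations $\tfrac{m}{4}+\tfrac{c}{2}$ and $\tfrac{n}{2w_0}+\tfrac{c}{2}$, which are \emph{invariant} under the shift $\tfrac{m}{2}\mapsto\tfrac{m}{2}-1$, $\tfrac{n}{w_0}\mapsto\tfrac{n}{w_0}-1$, $2c\mapsto 2c+2$; the squared and reflected Gamma factors and the exponential prefactors therefore cancel identically, and only the isolated factors $\Gamma(m)\Gamma(1-\tfrac m2)$, $\Gamma(n)\Gamma(1-\tfrac n2)$, $\Gamma(2c+1)$ remain to be absorbed by the explicit multipliers $\tfrac{m}{2}\cdot\tfrac{n}{w_0}$ and $\tfrac{1}{(2c+1)(2c+2)}$.

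Two smaller points. First, your treatment of the boundary of the index set is in the right spirit (vanishing of reciprocal Gammas at non-positive integers), but it only becomes relevant once the correct series is being used. Second, your closing sentence --- that the identity is ``forced'' by $m_Em_K=m_\sigma^2$ modulo $J_{\mathcal{Q}}$ because that relation manifests as the involution equation on period integrals --- is circular as a justification here: that the relation yields a Picard--Fuchs equation for the periods is established in \S 5.1, but the content of this proposition is precisely that the $I$-function (an a priori unrelated hypergeometric series coming from Gromov--Witten theory) also satisfies that equation, which is what the term-by-term computation must establish independently.
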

\begin{proof}
Consider the coefficient of $\tilde{q}_E^{\frac{m}{2}}\tilde{q}_K^{\frac{n}{w_0}}\tilde{q}_{\sigma}^{2c}$ in $\partial_{\chi}^2 I_{\mathcal{Y}}(\psi, \varphi, \chi).$ To retrieve the corresponding coefficient in $\partial_{\psi}\partial_{\varphi} I_{\mathcal{Y}}(\psi, \varphi, \chi)$ we multiply by $\frac{m}{2}\frac{n}{w_0}$, divide by $(2c+1)(2c+2)$, and perform the shift  $$2c \mapsto 2(c+1), \quad \frac{m}{2} \mapsto \frac{m}{2}-1, \quad \frac{n}{w_0} \mapsto \frac{n}{w_0} -1.$$ Note that $\frac{m}{4} + \frac{c}{2}$ and $\frac{n}{2w_0} + \frac{c}{2}$ are preserved under this transformation. Absorbing the extra factors via the identity $x\Gamma(x) = \Gamma(x+1)$, we see that these coefficients are equal. 
\end{proof}

\begin{thm}[A quantum mirror theorem for X(1,1,1)] $I_{X(19,11)}(\psi, \varphi, \chi)$ solves the Picard-Fuchs equations for the three parameter family containing the twisted birational model of $X(1,1,1)$ parametrised by $\psi, \varphi, \chi.$ 
\end{thm}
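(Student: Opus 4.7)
The plan is to verify that each of the five Picard-Fuchs operators (i)--(v) annihilates $I_{X(19,1,1)}(\psi,\varphi,\chi)$, working coefficient-by-coefficient in the triple power series expansion obtained after the analytic continuation and the substitutions $\psi = \tilde q_E^2$, $\varphi = \tilde q_K^{w_0}$, $\chi = \tilde q_\sigma^{1/2}$. Operator (i), the involution equation, is already handled by the preceding proposition, so the work reduces to verifying (ii)--(v).

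First I would set up the bookkeeping. Under the change of variables, each derivative $\partial_\psi, \partial_\varphi, \partial_\chi$ becomes (up to a power of $\tilde q$) a logarithmic derivative in the corresponding $\tilde q$-variable, which on a monomial $\tilde q_E^{m/2}\tilde q_K^{n/w_0}\tilde q_\sigma^{2c}$ acts as multiplication by a linear combination of $m, n, c$. Multiplication by $\psi$, $\varphi$, or $\chi$ becomes a shift in the index $m$, $n$, or $c$ respectively. Each operator (ii)--(v) is therefore a finite sum of terms of the shape \emph{(polynomial in $m,n,c$)} $\times$ \emph{(simultaneous integer shift of $(m,n,c)$)}.

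Next, for each operator, I would extract the coefficient of a generic monomial $\tilde q_E^{m/2}\tilde q_K^{n/w_0}\tilde q_\sigma^{2c}$ on both the untwisted sector ($c\in\mathbb{N}_0$) and the $\mathbf{1}_\sigma$-twisted sector ($c\in\tfrac12\mathbb{N}_0\setminus\mathbb{N}_0$), and use the functional equation $x\Gamma(x) = \Gamma(x+1)$, together with $\Gamma(x)\Gamma(1-x) = \pi/\sin(\pi x)$ where the reflection identity of the continuation is needed, to align every gamma factor to a common reference point. The identity then reduces to a polynomial identity in $(m,n,c)$, whose vanishing encodes precisely the syzygy in the local ring $\mathcal{R}_{\mathrm{loc}}$ used to derive that operator. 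For (ii) and (iii), the only shifts are by $\pm 2$ in $m$ or by $\pm 1$ in $c$, and the verification is a short computation structurally parallel to the involution proof.

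The main obstacle will be operators (iv) and (v), whose polynomial coefficients have degree up to eight in $\chi$, so the coefficient match involves many simultaneous shifts and a large resultant polynomial identity in $(m,n,c)$. The mechanism is the same, but organising the cancellations so that the match is transparent is the genuine difficulty, and in practice one carries out the final polynomial identity via symbolic algebra (Macaulay2, as used earlier for the Gr\"obner basis step). Once both sectors are verified for all of (ii)--(v), $I_{X(19,1,1)}$ is annihilated by the full Picard-Fuchs system, completing the proof.
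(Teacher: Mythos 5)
Your proposal follows essentially the same route as the paper: operator (i) is dispatched by the preceding proposition, and operators (ii)--(v) are verified coefficient-by-coefficient by converting logarithmic derivatives into termwise multiplication by $m$, $n$, $c$ and multiplications by $\psi,\varphi,\chi$ into index shifts, absorbing factors via $x\Gamma(x)=\Gamma(x+1)$, with (iv) and (v) requiring only heavier bookkeeping of the same kind. The only substantive addition in the paper is the closing remark that dimension considerations show $I_{X(19,1,1)}$ \emph{compiles} the full solution set, which your write-up should also note.
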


\begin{proof}
For $I_{X(19,1,1)}(\psi, \varphi, \chi),$ we have $(w_0,w_1,w_2,w_3) = (25, 10, 8, 7).$ We must check the equations for operators (i)-(v), term by term. Equation (i) has already been shown. Via the identity $\xi^2\partial_{\xi}^2 = (\xi\partial_{\xi})^2-\xi\partial_{\xi}$, operator (ii) may be rewritten $$16\partial_{\psi}^2 -4(\psi\partial_{\psi})^2 + 4\psi\partial_{\psi} + 4(\psi\partial_{\psi})(\chi\partial_{\chi}) + (\chi\partial_{\chi})^2 - \chi\partial_{\chi}.$$. Noting that $\psi\partial_{\psi}$ and $\chi\partial_{\chi}$ have the effect of term-wise multiplication by $\frac{m}{2}$ and $2c$ respectively, we may compare the coefficients of $16\partial_{\psi}^2\partial_{\chi}^2 I_{\mathcal{Y}}(\psi, \varphi, \chi)$ and 
 $$(4(\psi\partial_{\psi})^2 - 4\psi\partial_{\psi} - 4(\psi\partial_{\psi})(\chi\partial_{\chi}) - (\chi\partial_{\chi})^2 + \chi\partial_{\chi})I_{\mathcal{Y}}(\psi, \varphi, \chi).$$ The equation corresponding to operator (iii) holds  similarly. (Note that entirely similar arguments for the above equations hold for any Borcea-Voisin threefold with a twist map, where $E$ has weights $(2,1,1).$)

After some labour, it is straightforward to check equations (iv) and (v) in the same way. With dimension considerations, $I_{X(19,1,1)}(\psi, \varphi, \chi)$ compiles these solutions.
\end{proof}
\begin{rem}
In \cite{S} we split the I-function into terms corresponding to $m \mod 4$ and $n \mod 6$. These separate terms are in fact themselves solutions compiling separate period integrals.
\end{rem}

\begin{thm}[A quantum mirror theorem for 1-parameter families containing X(6, 4, 0)]
$I_{X(6,4,0)}(\psi, 0, 0), I_{X(6, 4, 0)}(0, \varphi, 0)$ and $I_{X(6,4,0)}(0, 0, \chi)$ satisfy the Picard Fuchs equations for the three 1-parameter families containing the twisted birational model of $X(14,4,0)$ parametrised by $\psi, \varphi, \chi$ respectively.
\end{thm}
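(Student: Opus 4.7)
The plan is to treat each of the three specializations separately, since each reduces to a one-variable claim: a one-parameter restriction of the I-function is to be shown to solve a single ODE. Under the substitutions $\psi = \tilde{q}_E^2$, $\varphi = \tilde{q}_K^{w_0}$, $\chi = \tilde{q}_{\sigma}^{1/2}$ from just before Proposition~4.1, with $(w_0,w_1,w_2,w_3) = (5,2,2,1)$ for $X(6,4,0)$, the slices $I_{X(6,4,0)}(\psi,0,0)$, $I_{X(6,4,0)}(0,\varphi,0)$ and $I_{X(6,4,0)}(0,0,\chi)$ collapse the analytically continued sum to the $b=c=0$, $a=c=0$ and $a=b=0$ loci respectively. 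I would first write each restriction as an explicit single sum whose summand is a product of gamma functions with arguments linear in $m$, $n$, or $2c$.

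\textbf{Term-by-term comparison.} For each of the Picard-Fuchs operators (vi), (vii), (viii), I would convert to logarithmic derivatives via the identity
\begin{equation*}
\xi^k\partial_\xi^k = \prod_{j=0}^{k-1}(\xi\partial_\xi - j),
\end{equation*}
since $\psi\partial_\psi$, $\varphi\partial_\varphi$, $\chi\partial_\chi$ act on the corresponding restricted series as termwise multiplication by $m/2$, $n/w_0$, and $2c$ respectively. This reduces each Picard-Fuchs equation to an identity between coefficients of $\tilde{q}_E^{m/2}$, $\tilde{q}_K^{n/w_0}$, or $\tilde{q}_\sigma^{2c}$, expressed as a rational recurrence of order 2, 3, or 4 in the summation index. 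Clearing shifts via $x\Gamma(x) = \Gamma(x+1)$ collapses each equation to a polynomial identity of low degree in the summation index. This directly generalizes the technique of Theorem~5.1 from a three-variable to a one-variable setting, and should dispatch (vi) and (vii) with modest bookkeeping.

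\textbf{Main obstacle.} The hardest case will be (viii), whose coefficients are polynomials in $\chi$ of degree up to $30$ with very large numerical constants such as $112100835937500000000$. These arise from explicit evaluations of $\Gamma$ at half-integer arguments, via the branch $c\in\tfrac12\mathbb{N}_0\setminus\mathbb{N}_0$ of the I-function, combined with the specific K3 weights $(5,2,2,1)$. I would attack (viii) by writing the coefficient of $\tilde{q}_\sigma^{2c}$ in closed product-of-gamma form, computing the ratio of consecutive coefficients as a rational function of $c$ using the reflection identity $\Gamma(x)\Gamma(1-x) = \pi/\sin(\pi x)$ to eliminate the trigonometric factors arising in analytic continuation, and then matching the resulting rational function to the recurrence dictated by (viii). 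Confirming the exact numerical constants there is the single delicate step; the rest of the verification is a structural check analogous to the previous theorem.
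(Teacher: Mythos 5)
Your proposal follows essentially the same route as the paper, which itself only says the three one-parameter claims "can be checked similarly" to the earlier arguments: restrict to each one-parameter slice, rewrite the operators (vi)--(viii) in terms of logarithmic derivatives acting termwise by $m/2$, $n/w_0$, $2c$, and verify the resulting coefficient recurrences using $x\Gamma(x)=\Gamma(x+1)$. Your write-up is in fact more explicit than the paper's proof, correctly isolating operator (viii) and its half-integer branch as the only computationally delicate step.
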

\begin{proof}
In this case $(w_0, w_1, w_2, w_3) = (5, 2, 2, 1).$ Similarly to the proofs of the previous two propositions, it can be checked that $I_{X(6,4,0)}(\psi, 0, 0), I_{X(6, 4, 0)}(0, \varphi, 0)$ and $I_{X(6,4,0)}(0, 0, \chi)$ lie in the kernels of operators (vi), (vii), (viii) respectively.
\end{proof}
\begin{rem}
In general, the I-function found in \cite{S} for each new extra ambient sector included corresponding to the monomial $m'_j$, the I-function acquires a factor of the form $x_j^{k_j}/k!,$ and all occurrences of $\frac{c}{2}$ in (1) are replaced by $\frac{c}{2} + \frac{1}{2}k_js_{j, 4},$ where $s_{j, 4}$ comes from the representation of that sector in the toric stacky structure. For any Picard-Fuchs equation coming from a product relation as in \S5.4, the definition of $s_{j, 4}$ implies that cancellation in fact occurs in exactly the same way as for the involution equation, and that further equation holds.
\end{rem}

\begin{rem}
With more processing power than that at our disposal, it should be possible to use the same procedure to find full sets of Picard-Fuchs equations for \textit{all} Borcea-Voisin threefolds with twisted birational models and verify the quantum mirror theorem in genus zero for \textit{all} ambient sectors. Another method of attack lies in the GKZ systems of \cite{HoLiYa}. We leave this for another paper.
 \end{rem}
 
\begin{bibsection}
\begin{biblist}

\bib{ABS1}{article}{
author = {Artebani, Michela}
author = {Boissi\`ere, Samuel}
author = {Sarti, Alessandra}
title = {The Berglund-H\"ubsch-Chiodo-Ruan mirror symmetry for K3 surfaces}
journal = {Arxiv pre-print}
date = {2011}
}

\bib{ABS2}{article}{
author = {Artebani, Michela}
author = {Boissi\`ere, Samuel}
author = {Sarti, Alessandra}
title = {Borcea-Voisin Calabi-Yau Threefolds and Invertible Potentials}
journal = {Arxiv pre-print}
date = {2015}
}

\bib{B}{article}{
author = {Borcea, Ciprian}
title = {Calabi-Yau threefolds and complex multiplication}
journal = {Essays on mirror manifolds}
pages = {489--502}
publisher = {Int. Press}
place = {Hong Kong}
date = {1992}
}
\bib{BCS}{article}{
author = {Borisov, Lev}
author = {Chen, Linda}
author = {Smith, Gregory}
title = {The orbifold Chow ring of toric Deligne-Mumford stacks}
journal = {J. Amer. Math. Soc.}
volume = {18(1)}
pages = {193-215}
date = {2005}
}
\bib{CDGP}{article}{
author = {Candelas, Philip}
author = {de la Ossa, Xenia}
author = {Green, Paul}
author = {Parks, Linda}
title = {A pair of Calabi-Yau manifolds as an exactly soluble superconformal field theory}
journal = {Nuclear Physics B}
volume = {359 (1)}
pages = {21--74}
date = {1991}
}
\bib{CCIT1}{article}{
author = {Coates, Tom}
author = {Corti, Alessio}
author = {Iritani, Hiroshi}
author = {Tseng, Hsian-Hua}
title = {A Mirror Theorem for Toric Stacks}
date = {2013}
journal = {ArXiv e-print}
eprint = {http://arxiv.org/abs/1310.4163}
}

\bib{CCIT2}{article}{
author = {Coates, Tom}
author = {Corti, Alessio}
author = {Iritani, Hiroshi}
author = {Tseng, Hsian-Hua}
title = {Some Applications of the Mirror Theorem for Toric Stacks}
date = {2014}
journal = {ArXiv e-print}
eprint = {http://arxiv.org/abs/1401.2611}
}
\bib{CheRu1}{article}{
author = {Chen, Weimin}
author = {Ruan, Yongbin}
title = {Orbifold Gromov-Witten theory}
journal = {Contemp. Math.}
volume = {310}
pages = {25--85}
date = {2002}
}
\bib{CheRu2}{article}{
author = {Chen, Weimin}
author = {Ruan, Yongbin}
title = {A new cohomology theory of orbifold}
journal = {Comm. Math. Phys.}
volume = {248(1)}
pages = {1--31}
date = {2004}
}
\bib{FP}{article}{
author = {Fulton, William}
author = {Pandharipande, Rahul}
title = {Notes On Stable Maps And Quantum Cohomology}
journal = {ArXiv pre-print}
eprint = {http://arxiv.org/abs/9608011v2}
date = {1997}
}
\bib{Giv}{article}{
author = {Givental, Alexander}
title = {Equivariant Gromov-Witten invariants}
journal = {Internat. Math. Res. Notices (13)}
pages = {613-663}
date = {1996}
}
\bib{GoLiYu}{article}{
author = {Goto, Yasuhiro}
author = {Livn\'e, Ron}
author = {Yui, Noriko}
title = {Automorphy of Calabi-Yau threefolds of Borcea-Voisin type over $\mathbb{Q}$}
journal = {Comm. in Num. Th. and Phys.}
date = {2012}
}
\bib{HoLiYa}{article}{
author = {Hosono, S.}
author = {Lian, B.H.}
author = {Yau, S.-T.}
title = {GKZ-Generalized Hypergeometric Systems in Mirror Symmetry of Calabi-Yau Hypersurfaces}
journal = {ArXiv pre-preint}
date = {1995}
}
\bib{Kol}{article}{
author = {Koll\'ar, J.}
title = {Flops}
journal = {Nagoya Math. J.}
volume = {113}
pages = {15-36}
date = {1989}
}
\bib{LiRu}{article}{
author = {Ruan, Yongbin}
author = {Li, An-Min}
title = {Symplectic surgery and Gromov-Witten invariants of Calabi-Yau 3-folds}
journal = {Invent. Math.}
volume = {145}
date = {2001}
pages = {151-218}
}
\bib{M}{article}{
author = {Manin, Yuri}
title = {Algebraicheskiye Kriviye nad Polyami s Differentsiyovaniyem}
journal = {Izvestiya Akademii Nauk SSSR. Syeriya Matyematicheskaya}
volume = {22}
pages = {737-756}
date = {1958}
}
\bib{Mor}{article}{
author = {Morrison, David}
title = {Picard-Fuchs equations and mirror maps for hypersurfaces}
journal = {Duke Mathematics Department Pre-print Series}
date = {1991}
}
\bib{N}{article}{
author = {Nikulin, V.V.}
title = {Konyechniye gruppi avtomortizmov kyelyerovnikh soverkhnostyei tipa K3}
journal = {Trudy Moskov. Mat. Obshch.}
volume = {38}
pages = {75--137}
date = {1979}
}
\bib{S}{article}{
author = {Schaug, Andrew}
title = {On the Gromov-Witten Theory of Borcea-Voisin Orbifolds and its Analytic Continuations}
journal = {Arxiv pre-print}
date = {2015}
}
\bib{V}{article}{
author = {Voisin, Claire}
title = {Miroirs et  involutions sur les surfaces K3}
journal = {Ast\'erisque}
volume = {218}
pages = {273-323}
date = {1993}
}

\end{biblist}

\end{bibsection}



\end{document}